\newtheorem{theorem}{Theorem}
\newtheorem{corollary}[theorem]{Corollary}
\newtheorem{lemma}[theorem]{Lemma}
\newtheorem{proposition}[theorem]{Proposition}
\newtheorem{remark}[theorem]{Remark}
\def\bds{\begin{displaystyle}}
\def\eds{\end{displaystyle}}
\begin{document}

\title{The mixing advantage is less than 2}
\author{K. Hamza}
\address{Kais Hamza, School of Mathematical Sciences, Monash University}
\author{P. Jagers}
\address{Peter Jagers, Mathematical Statistics, Chalmers University of Technology}
\author{A. Sudbury}
\address{Aidan Sudbury, School of Mathematical Sciences, Monash University}
\author{D. Tokarev}
\address{Daniel Tokarev, School of Mathematical Sciences, Monash University}

\begin{abstract}
Corresponding to $n$ independent non-negative random variables
$X_1,\ldots,X_n$, are values $M_1,\ldots,M_n$, where each $M_i$ is
the expected value of the maximum of $n$ independent copies of
$X_i$. We obtain an upper bound to the expected value of the
maximum of $X_1,\ldots,X_n$ in terms of $M_1,\ldots,M_n$. This
inequality is sharp in the sense that the quantity and its bound
can be made as close to each other as we want. We also present
related comparison results.
\end{abstract}

\keywords{mixing, stochastic ordering, distribution of the maximum\\
{\it AMS Classification:} 60E15, 60K10}

\maketitle

\markboth{{\normalsize\sc Hamza, Jagers, Sudbury \&
Tokarev}}{{\normalsize\sc The mixing advantage is less than 2}}

\section{Introduction}

To illustrate the main thrust of this paper, we consider a simple
two-component parallel system; say a light pole made up of two
light bulbs. If the system is considered to have failed once both
components, assumed to act independently, fail, then a reasonable
measure of the performance of the system is
$\mathbb{E}[\max(X,Y)]$, where $X$ and $Y$ are the independent
random lifetimes of the two components.

Assume that, to build the system, we may choose from any of two
manufacturers (i.e. two lifetime distributions). Should we choose
two components from the same manufacturer or should we mix? In the
case of manufacturers with identical performances
($\mathbb{E}[\max(X_1,X_2)] = \mathbb{E}[\max(Y_1,Y_2)]$), there
is (almost) always a net gain in mixing. It is then natural to ask
how much gain can one achieve and further to identify situations
in which this gain is attained.

In the spirit of the above example, we shall call a family of $n$
independent random variables an $n$-assembly. When these are also
identically distributed, we will say that they form a similar
$n$-assembly. We call performance of an $n$-assembly (whether
similar or not) the expected value of its maximum. The aim of this
paper is to bound (from above as well as below) the performance of
an $n$-assembly relying solely on the performances of all similar
$n$-assemblies from which it is drawn. It will be shown that
mixing (i.e. using assemblies issued from different distributions)
improves performance by a factor, hereby called the mixing factor,
of up to (but not including) 2. We will further show that when all
similar $n$-assemblies have the same performance, the mixing
factor is at least 1.

While an extensive literature exists on the expected value of the
maximum of $n$ independent and identically distributed random
variables (see for example \cite{DavNag03}), with the exemption of
\cite{ArnGro79} and \cite{Sen70} (see also \cite{BalBal08} and
\cite[Section 5.2]{DavNag03}), not much work has concentrated on
the case of non-identically distributed random variables.
Furthermore, the aforementioned papers do not attempt a comparison
with $M_1,\ldots,M_n$. In \cite{ArnGro79}, the authors obtain
upper and lower bounds in terms of $\mathbb{E}[X_i]$ and
$\mathrm{var}(X_i)$ (assumed to be finite) where $X_1,\ldots,X_n$
are possibly dependent random variables with possibly different
distributions. These bounds generalise those of \cite{HarDav54}
and \cite{Gum54} which deal with the independent and identically
distributed case. In \cite{Sen70}, the author solely focuses on
obtaining a lower bound. This is done by comparing the
distribution function of the maximum of $X_1,\ldots,X_n$ to that
of the maximum of $n$ independent copies of an equally-weighted
probability mixture of $X_1,\ldots,X_n$ (see later). More
recently, in \cite{Tok07}, the author investigates the performance
of an $n$-assembly constructed from 2 distributions and its
behaviour as the make-up of the $n$-assembly changes.

\subsection*{Notations and Assumptions}
Throughout this paper, we use the following notations and
assumptions. For any sequence $x_1,\ldots,x_n$, we write $x_1\vee
x_2\vee\ldots\vee x_n = \bigvee_{k=1}^nx_k$ for the maximum
$\max(x_1,x_2,\ldots,x_n)$. Random variables will generally be
indexed in the following way: $X_i^j$ refers to the $j$th element
in the similar $n$-assembly $i$; i.e. $X_i^1,\ldots,X_i^n$ are
independent random variables having the same distribution as
$X_i$. $X_i^{(n)} = \bigvee_{k=1}^nX_i^k$, $X_{(n)} =
\bigvee_{i=1}^nX_i$, $M_i = \mathbb{E}[X_i^{(n)}]$, $M_{(n)} =
\bigvee_{i=1}^nM_i$ and $\bar M = \frac1n\sum_{i=1}^nM_i$.
Finally, we denote by $\theta_n$ the mixing factor of a set of
$n$-assemblies:
$$\theta_n = \frac{\mathbb{E}[X_{(n)}]}{M_{(n)}}.$$
We assume that all random variables are non-negative and have
finite first moment.

\section{The Mixing Factor Bounds}
\subsection{The Main Results}
The main theorem which we wish to show is:
\begin{theorem}
Let $X_1,\ldots,X_n$ be independent random variables and
$M_i=\mathbb{E}[X_i^{(n)}]$, $i=1,2,...,n$. Then
\begin{equation}
\bar M\leq\mathbb{E}[X_{(n)}]\leq\bar
M+\frac{n-1}{n}M_{(n)}.\label{MainInq}
\end{equation}

In particular, if $M_i=M$, $i=1,...,n$,
$$M\leq\mathbb{E}[X_{(n)}]\leq(2-1/n)M.$$
\label{MainTh}
\end{theorem}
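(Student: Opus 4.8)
The plan is to pass to the survival–function representation and treat the two halves of \eqref{MainInq} separately.

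\textbf{Set-up and the lower bound.} For a nonnegative integrable $Y$ with distribution function $F_Y$ one has $\mathbb{E}[Y]=\int_0^\infty(1-F_Y(t))\,dt$. Writing $F_i$ for the distribution function of $X_i$, this gives
\[
M_i=\int_0^\infty\bigl(1-F_i(t)^n\bigr)\,dt,\qquad
\mathbb{E}[X_{(n)}]=\int_0^\infty\Bigl(1-\prod_{i=1}^nF_i(t)\Bigr)\,dt,\qquad
\bar M=\int_0^\infty\Bigl(1-\tfrac1n\sum_{i=1}^nF_i(t)^n\Bigr)\,dt .
\]
The left inequality in \eqref{MainInq} is then immediate: by the arithmetic--geometric mean inequality $\prod_iF_i(t)\le\frac1n\sum_iF_i(t)^n$ for every $t$, so the integrand for $\mathbb{E}[X_{(n)}]$ dominates that for $\bar M$ pointwise; equality throughout forces all $F_i$ to coincide, i.e.\ the similar case.

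\textbf{Reduction of the upper bound.} Let $m$ be an index with $M_m=M_{(n)}$; equivalently $\int_0^\infty F_m(t)^n\,dt\le\int_0^\infty F_i(t)^n\,dt$ for all $i$. Since $\bar M+\frac{n-1}{n}M_{(n)}=M_{(n)}+\frac1n\sum_{i\ne m}M_i$, the right inequality in \eqref{MainInq} is equivalent to
\[
\mathbb{E}[X_{(n)}]-M_{(n)}=\int_0^\infty\Bigl(F_m(t)^n-\prod_{i=1}^nF_i(t)\Bigr)\,dt\ \le\ \frac1n\sum_{i\ne m}M_i .
\]
I would approach the left side via the coupling $X_{(n)}=X_m\vee\bigvee_{i\ne m}X_i$, using the convex, increasing, $1$-Lipschitz function $\rho(a)=\mathbb{E}[X_m\vee a]$ (whose derivative is $F_m$): with $A=\bigvee_{i\ne m}X_i$ and $B$ the maximum of the $n-1$ fresh copies of $X_m$ used to build $X_m^{(n)}$, one has $\mathbb{E}[X_{(n)}]=\mathbb{E}[\rho(A)]$ and $M_m=\mathbb{E}[\rho(B)]$, so the left side equals $\int_0^\infty F_m(t)\bigl(F_m(t)^{n-1}-\prod_{i\ne m}F_i(t)\bigr)\,dt$. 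The auxiliary one–variable fact $1-y^{k+1}\le\frac{k+1}{k}(1-y^k)$ on $[0,1]$ (equivalently $\mathbb{E}[Y^{(k+1)}]\le\frac{k+1}{k}\mathbb{E}[Y^{(k)}]$), together with the subadditivity bound $1-\prod_{i\ne m}F_i\le\sum_{i\ne m}(1-F_i)$, is the natural toolkit for breaking this into per-index pieces comparable to $\frac1n M_i$.

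\textbf{The main obstacle.} The displayed inequality is \emph{false at the pointwise (in $t$) level} — for instance where $F_m(t)=1$ while $F_i(t)\approx0$ for $i\ne m$ — and the crude Lipschitz/subadditivity bounds above (replacing $F_m$, or $F_m^{\,n-1}-\prod_{i\ne m}F_i$, by $1$) lose a constant factor. The extremality $M_m=M_{(n)}$ must therefore be used at the level of the integrals $\int F_i^n\,dt$, not the integrands. I expect this to force either an induction on $n$, deleting the component that realises $M_{(n)}$, or a variational argument reducing the extremal configuration to two-valued ($0/a$) laws, for which the inequality can be verified directly and is seen to be sharp (e.g.\ $X_m$ constant and the remaining $X_i$ degenerate at $0$ gives equality, explaining the asserted sharpness). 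Finally, the particular case $M\le\mathbb{E}[X_{(n)}]\le(2-1/n)M$ follows at once by substituting $M_i\equiv M$ (so $\bar M=M_{(n)}=M$) into \eqref{MainInq}.
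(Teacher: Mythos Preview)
Your lower bound argument is correct and is exactly the paper's Proposition~5: the AM--GM inequality applied pointwise to the survival functions.

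For the upper bound, however, you do not actually give a proof. You correctly rewrite the target inequality as
\[
\int_0^\infty F_m(t)\Bigl(F_m(t)^{n-1}-\prod_{i\ne m}F_i(t)\Bigr)\,dt\ \le\ \frac1n\sum_{i\ne m}M_i,
\]
and you correctly diagnose why your proposed toolkit (the Lipschitz bound on $\rho$, subadditivity $1-\prod F_i\le\sum(1-F_i)$, and the one-variable inequality $1-y^{k+1}\le\frac{k+1}{k}(1-y^k)$) cannot close the gap: the inequality is false pointwise in $t$, and these estimates throw away the constraint $M_m=M_{(n)}$ at exactly the place where it is needed. But having named the obstacle, you stop, and only \emph{conjecture} that a variational reduction to two-point laws is required. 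That conjecture is right, but it is the entire content of the upper bound; what remains after ``I expect this to force\ldots'' is not a proof but a proof sketch of a proof sketch.

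The paper carries out precisely the variational programme you guess at. It first reduces to finitely supported $X_i$ by monotone approximation, and then shows (Propositions~9 and~10) that any two adjacent atoms of one of the $X_i$ can be coalesced into a single atom in a way that preserves $M_i$ and does not decrease $\mathbb{E}[X_{(n)}]$ --- this is the nontrivial step, and the case where atoms of other variables lie between the two atoms being merged requires a separate convexity argument. Iterating reduces every $X_i$ to a $\{0,x_i\}$-valued variable. For such variables the bound is then verified directly (Proposition~8) by a short induction that peels off the variable with the largest $x_i$, using the monotonicity of $p\mapsto up+v\frac{1-p}{1-p^n}$ for $v<u$. None of this is visible in your proposal beyond the final parenthetical guess. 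Incidentally, your claimed equality case ($X_m$ constant, the others identically zero) is genuine but degenerate; the paper's point about sharpness is that with all $M_i$ fixed and \emph{positive} the bound is approached but not attained.
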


\begin{corollary}
For $n$-assemblies, the mixing factor does not exceed $2-1/n$ and,
in the case of equally performing similar $n$-assemblies, it is at
least 1.
\end{corollary}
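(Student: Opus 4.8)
The plan is to read both halves of the corollary directly off Theorem \ref{MainTh}. By definition the mixing factor is $\theta_n=\mathbb{E}[X_{(n)}]/M_{(n)}$, and inequality \eqref{MainInq} already sandwiches $\mathbb{E}[X_{(n)}]$ between $\bar M$ and $\bar M+\frac{n-1}{n}M_{(n)}$, so the corollary amounts to rewriting that sandwich after dividing by the (positive) quantity $M_{(n)}$.

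For the upper bound, I would begin from the right-hand inequality in \eqref{MainInq}, namely $\mathbb{E}[X_{(n)}]\leq\bar M+\frac{n-1}{n}M_{(n)}$. The single observation needed is that an average never exceeds the corresponding maximum: since $M_i\leq M_{(n)}$ for each $i$, summing over $i$ and dividing by $n$ gives $\bar M=\frac1n\sum_{i=1}^nM_i\leq M_{(n)}$. Substituting this into the upper bound yields $\mathbb{E}[X_{(n)}]\leq M_{(n)}+\frac{n-1}{n}M_{(n)}=(2-1/n)M_{(n)}$, and dividing through by $M_{(n)}$ produces $\theta_n\leq 2-1/n$, exactly the first assertion.

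For the lower bound in the equally-performing case, I would impose $M_i=M$ for all $i$, so that both $M_{(n)}=M$ and $\bar M=M$. The left-hand inequality in \eqref{MainInq} then collapses to $M\leq\mathbb{E}[X_{(n)}]$, whence $\theta_n=\mathbb{E}[X_{(n)}]/M\geq 1$. Equivalently, this is immediate from the specialised statement $M\leq\mathbb{E}[X_{(n)}]\leq(2-1/n)M$ already recorded in the theorem, which in this symmetric situation delivers both halves of the corollary at once.

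I do not expect a genuine obstacle here, since the corollary is essentially a translation of Theorem \ref{MainTh} into the language of $\theta_n$. The only step requiring even a moment's thought is the elementary inequality $\bar M\leq M_{(n)}$, which is precisely what converts the mixed-assembly bound $\bar M+\frac{n-1}{n}M_{(n)}$ into the clean factor $2-1/n$; the remainder is direct substitution and division by the positive normaliser $M_{(n)}$ (well defined whenever the assemblies are nondegenerate, so that $M_{(n)}>0$).
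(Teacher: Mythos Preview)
Your proposal is correct and mirrors exactly what the paper intends: the corollary is stated without its own proof because it is an immediate consequence of Theorem~\ref{MainTh}, obtained by combining the right-hand inequality of \eqref{MainInq} with the trivial bound $\bar M\leq M_{(n)}$ and then dividing by $M_{(n)}$, while the lower bound in the equal-$M_i$ case is read off directly from the specialised inequality $M\leq\mathbb{E}[X_{(n)}]\leq(2-1/n)M$.
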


In the case where some of the random variables $X_1,\ldots,X_n$
are identically distributed, an improved upper bound may be
achieved. Such a bound follows immediately from Theorem
\ref{MainTh} when all distributions are repeated an equal amount.

\begin{corollary}
Suppose the $n$-assembly $X_1,\ldots,X_n$ is made up of $k$
similar $m$-assemblies ($n=km$). Then
$$\bar M\leq\mathbb{E}[X_{(n)}]\leq\bar M+\frac{k-1}{k}M_{(n)}.$$
\end{corollary}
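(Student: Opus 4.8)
The plan is to reduce the statement to Theorem~\ref{MainTh} applied to a $k$-assembly obtained by collapsing each of the $k$ constituent similar $m$-assemblies to a single random variable, namely its maximum. Arrange $X_1,\dots,X_n$ into $k$ consecutive blocks of length $m$, so that the $j$th block $X_{(j-1)m+1},\dots,X_{jm}$ consists of $m$ independent copies of some non-negative random variable $Y_j$, for $j=1,\dots,k$. Put $Z_j=\bigvee_{\ell=1}^m X_{(j-1)m+\ell}$. Then $Z_j$ has the law of the maximum of $m$ independent copies of $Y_j$; since the blocks are disjoint and the whole family $X_1,\dots,X_n$ is independent, $Z_1,\dots,Z_k$ are independent, and each has finite first moment because $0\le Z_j\le\sum_{\ell=1}^m X_{(j-1)m+\ell}$. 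Finally $X_{(n)}=\bigvee_{i=1}^n X_i=\bigvee_{j=1}^k Z_j$, so $\mathbb{E}[X_{(n)}]$ is exactly the performance of the $k$-assembly $Z_1,\dots,Z_k$.

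Next I would match the $M$-quantities across the two assemblies. The one substantive observation is a regrouping identity: the maximum of $k$ independent copies of $Z_j$ is distributed as the maximum of $k$ independent copies of ``the maximum of $m$ independent copies of $Y_j$'', which is just the maximum of $n=km$ independent copies of $Y_j$. Hence, writing $N_j$ for the expected value of the maximum of $k$ independent copies of $Z_j$ (the quantity playing the role of $M_i$ for the $k$-assembly $Z_1,\dots,Z_k$), we get $N_j=\mathbb{E}[Y_j^{(n)}]$. On the other hand, for every index $i$ in block $j$ we have $X_i\stackrel{d}{=}Y_j$, so $M_i=\mathbb{E}[X_i^{(n)}]=\mathbb{E}[Y_j^{(n)}]=N_j$. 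Therefore
$$\bar M=\frac1n\sum_{i=1}^n M_i=\frac1n\sum_{j=1}^k mN_j=\frac1k\sum_{j=1}^k N_j,\qquad M_{(n)}=\bigvee_{i=1}^n M_i=\bigvee_{j=1}^k N_j,$$
i.e. $\bar M$ and $M_{(n)}$ for the $n$-assembly coincide with the average performance and maximum performance of the similar $k$-assemblies underlying $Z_1,\dots,Z_k$.

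Now I would invoke Theorem~\ref{MainTh} for the $k$-assembly $Z_1,\dots,Z_k$ (with $n$ there replaced by $k$), which gives
$$\frac1k\sum_{j=1}^k N_j\ \le\ \mathbb{E}\Bigl[\bigvee_{j=1}^k Z_j\Bigr]\ \le\ \frac1k\sum_{j=1}^k N_j+\frac{k-1}{k}\bigvee_{j=1}^k N_j.$$
Substituting $\mathbb{E}[\bigvee_j Z_j]=\mathbb{E}[X_{(n)}]$, $\frac1k\sum_j N_j=\bar M$ and $\bigvee_j N_j=M_{(n)}$ from the previous step yields exactly $\bar M\le\mathbb{E}[X_{(n)}]\le\bar M+\frac{k-1}{k}M_{(n)}$, as claimed.

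I do not expect a genuine obstacle here: the argument is essentially bookkeeping once the block structure is fixed. The only points that require a little care are verifying that $Z_1,\dots,Z_k$ genuinely form a $k$-assembly (independence of the blocks, inherited finiteness of the first moment) and stating the regrouping identity cleanly, since it is the sole place where the hypothesis $n=km$ is actually used.
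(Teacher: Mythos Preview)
Your argument is correct and is exactly the reduction the paper has in mind: the corollary is stated to follow immediately from Theorem~\ref{MainTh}, and the way one makes this precise is by collapsing each of the $k$ similar $m$-blocks to its maximum $Z_j$, observing that $Z_j^{(k)}\stackrel{d}{=}Y_j^{(km)}=Y_j^{(n)}$ so that the $N_j$'s coincide with the $M_i$'s blockwise, and then invoking Theorem~\ref{MainTh} for the resulting $k$-assembly. There is nothing to add.
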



\begin{remark}
Let $F_i$ denote the distribution function of $X_i$ and
$G_i=F_i^n$ be the distribution function of $X_i^{(n)}$. Then
(\ref{MainInq}) can be rewritten as
\begin{equation}
0\leq\int_0^\infty(\bar G(s)-\tilde G(s))ds\leq(1-1/n)\max_{1\leq
i\leq n}\int_0^\infty(1-G_i(s))ds,\label{GAM}
\end{equation}
where $\bar G$ and $\tilde G$ are the arithmetic and geometric
means, respectively, of the distribution functions
$G_1,G_2,\ldots,G_n$.

Equation (\ref{GAM}) provides an upper bound (the lower bound is a
direct consequence of the arithmetic and geometric mean
inequality) on the $L^1$-distance between the geometric and
arithmetic means of a sequence of $n$ distribution functions on
the positive half-line with finite means.

In probabilistic terms, this expresses the following fact. Let
$Y_1,\ldots,Y_n$ be independent random variables (non-negative
with finite mean). Let $U$ be an equally-weighted probability
mixture of $Y_1,\ldots,Y_n$ and $V$ be such that the distribution
of the maximum of $n$ independent copies of $V$ is that of
$Y_{(n)}$. Then
$$0\leq\mathbb{E}[V]-\mathbb{E}[U]\leq(1-1/n)\max_{1\leq
i\leq n}\mathbb{E}[Y_i].$$
\end{remark}

As hinted in the introduction, the lower bound is not optimal. In
\cite{Sen70} (see also \cite{BalBal08} and \cite[Section
5.2]{DavNag03}), it is shown that
\begin{equation}
\mathbb{P}(X_{(n)}\leq x) \leq \mathbb{P}(Z^{(n)}\leq
x),\quad\forall x>0,\label{Sen}
\end{equation}
where $Z^1,\ldots,Z^n$ are independent copies of an
equally-weighted probability mixture of $X_1,\ldots,X_n$.
Combining the inequality of the arithmetic and geometric means
with a moment inequality, we can further bound the right hand side
of (\ref{Sen})
$$\mathbb{P}(Z^{(n)}\leq x) =
\left(\frac1n\sum_{k=1}^n\mathbb{P}(X_k\leq x)\right)^n \leq
\frac1n\sum_{k=1}^n\mathbb{P}(X_k\leq x)^n =
\frac1n\sum_{k=1}^n\mathbb{P}(X_k^{(n)}\leq x).$$ As a
consequence, we obtain an improved lower bound for
$\mathbb{E}[X_{(n)}]$,
$$\bar M\leq\mathbb{E}[Z^{(n)}]\leq\mathbb{E}[X_{(n)}]$$
Although a better bound for the performance of an $n$-assembly,
$\mathbb{E}[Z^{(n)}]$ cannot be expressed in terms of the
performances of all similar $n$-assemblies from which it is drawn.
In other words, $\mathbb{E}[Z^{(n)}]$ is not a bound that can be
expressed in terms of $M_1,\ldots,M_n$. Furthermore, it is shown
in Corollary \ref{BoundCase} that, in the case of bounded random
variables, an improved lower bound, expressed in terms of
$M_1,\ldots,M_n$, can be achieved. This lower bound is shown, in
the special case of a two-point distribution, to outperform
$\mathbb{E}[Z^{(n)}]$.

\subsection{Two Toy Examples}
Before we embark on the proof of the main Theorem, we look at the
simple case of two-point distributions. The first example
presented here will give some insight into how the improved bound
of Corollary \ref{BoundCase} and that of \cite{Sen70} compare. The
second example will demonstrate the sharpness of the upper bound
of Theorem \ref{MainTh}; i.e. we show that $\mathbb{E}[X_{(n)}]$
can be made as close to the upper bound as we want.

\begin{enumerate}

\item Assume that the random variables $X_1,\ldots,X_n$ are all
concentrated on two points, 0 and $b$, and let $p_i =
\mathbb{P}[X_i=0]$. Then the distribution of $Z$, the
equally-weighted probability mixture of $X_1,\ldots,X_n$, is given
by $\bds\mathbb{P}[Z=0] = \frac1n\sum_{i=1}^np_i = \bar p\eds$
and,
$$\mathbb{E}[Z^{(n)}] = b(1-\bar p^n)\mbox{ and }M_i = b(1-p_i^n).$$
Therefore,
$$b-\prod_{i=1}^n\sqrt[n]{b-M_i}-\mathbb{E}[Z^{(n)}] =
b\bar p^n-b\prod_{i=1}^np_i \geq 0,$$ by a simple application of
the inequality for the arithmetic and geometric means.

\item Again, we assume that the random variables $X_1,\ldots,X_n$
are all concentrated on two points, or less. However, in this
case, we allow the non-zero values $x_1,\ldots,x_n$, to be
different. In fact, we assume (without loss of generality) that
$X_n$ is non-random, that all other random variables take 0, with
probability $\bds p_k = \sqrt[n]{1-M_k/x_k} > 0\eds$, and $x_k$,
with probability  $1-p_k > 0$, and that
$$M_{(n-1)} \leq M_n < x_1 < \ldots < x_{n-1}.$$
Then, it is shown in the proof of Proposition \ref{TwoPointProp}
(from which this very construction is extracted), that
$$\mathbb{E}[X_{(n-1)}\vee M_n] = p_{n-1}\mathbb{E}[X_{(n-2)}\vee M_n] +
(1-p_{n-1})\frac{M_{n-1}}{1-p_{n-1}^n}.$$ Letting $p_{n-1}$ go to
1 and $x_{n-1}$ to infinity so that $M_{n-1}$ remains constant, we
see that $\mathbb{E}[X_{(n)}]$ approaches
$\bds\mathbb{E}[X_{(n-2)}\vee M_n] + \frac{M_{n-1}}{n}\eds$. Now
letting $p_{n-2}$ go to 1 and $x_{n-2}$ to infinity so that
$M_{n-2}$ remains constant, we see that $\mathbb{E}[X_{(n)}]$
approaches $\bds\mathbb{E}[X_{(n-3)}\vee M_n] +
\frac{M_{n-2}+M_{n-1}}{n}\eds$. Repeating this process leads to
the fact that $\mathbb{E}[X_{(n)}]$ approaches $\bds
M_n+\frac{M_1+\ldots+M_{n-1}}{n} = \bar M +
\frac{n-1}nM_{(n)}\eds$ (recall that $M_n = M_{(n)}$).

\end{enumerate}

\subsection{The Proofs}
The lower bound is a direct consequence of the inequality for
arithmetic and geometric means and is given here for completeness
only.

\begin{proposition}
If $X_1,\ldots,X_n$ are independent random variables with the
property that $\mathbb{E}[X_i^{(n)}]=M_i$, $i=1,2,...,n$ , then
$\mathbb{E}[X_{(n)}]\geq\bar M$. Furthermore, if
$\mathbb{E}[X_{(n)}] = \bar M$ then $X_1,\ldots,X_n$ are
identically distributed.\label{LwrBnd}
\end{proposition}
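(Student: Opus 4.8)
The plan is to pass to distribution functions and reduce the whole statement to a pointwise application of the arithmetic--geometric mean inequality. Writing $F_i$ for the distribution function of $X_i$, the maximum $X_{(n)}=\bigvee_{i=1}^nX_i$ has distribution function $\prod_{i=1}^nF_i$, while $X_i^{(n)}$ has distribution function $F_i^n$. Since all variables are non-negative with finite mean, I would use the identity $\mathbb{E}[Y]=\int_0^\infty(1-F_Y(s))\,ds$ to write
$$\mathbb{E}[X_{(n)}]=\int_0^\infty\Bigl(1-\prod_{i=1}^nF_i(s)\Bigr)ds,\qquad \bar M=\frac1n\sum_{i=1}^n\int_0^\infty(1-F_i(s)^n)\,ds=\int_0^\infty\Bigl(1-\frac1n\sum_{i=1}^nF_i(s)^n\Bigr)ds.$$
Subtracting, the claimed bound $\mathbb{E}[X_{(n)}]\ge\bar M$ is exactly $\int_0^\infty h(s)\,ds\ge0$, where $h(s)=\frac1n\sum_{i=1}^nF_i(s)^n-\prod_{i=1}^nF_i(s)$.

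Next I would observe that $h(s)\ge0$ for every $s$: setting $a_i=F_i(s)\in[0,1]$, the arithmetic--geometric mean inequality applied to $a_1^n,\ldots,a_n^n$ gives $\frac1n\sum a_i^n\ge\bigl(\prod a_i^n\bigr)^{1/n}=\prod a_i$. Integrating this pointwise bound in $s$ yields the lower bound at once.

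For the equality statement, suppose $\mathbb{E}[X_{(n)}]=\bar M$, so that $\int_0^\infty h(s)\,ds=0$ with $h\ge0$; hence $h=0$ Lebesgue-almost everywhere. Since each $F_i$ is right-continuous, so is $h$, and a non-negative right-continuous function vanishing almost everywhere vanishes identically (for each $s$ pick $t_k\downarrow s$ with $h(t_k)=0$ and let $k\to\infty$). Thus $h(s)=0$ for all $s\ge0$, which by the equality case of the arithmetic--geometric mean inequality forces $F_1(s)^n=\cdots=F_n(s)^n$, i.e.\ $F_1(s)=\cdots=F_n(s)$, for every $s\ge0$ (the degenerate situation in which some $F_i(s)=0$ is covered as well, since then $\frac1n\sum_jF_j(s)^n=0$ forces every $F_j(s)$ to vanish). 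Because the $X_i$ are non-negative their distribution functions already agree on $(-\infty,0)$, so $F_1=\cdots=F_n$ and the $X_i$ are identically distributed.

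The argument is essentially routine; the only point requiring a little care is the equality case, where one must justify the passage from ``$h=0$ almost everywhere'' to ``$h\equiv0$'' via right-continuity and then read off the equality condition in arithmetic--geometric mean correctly, including the degenerate case in which some $F_i(s)$ vanishes.
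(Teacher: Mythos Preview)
Your proof is correct and follows essentially the same route as the paper: both reduce the inequality to a pointwise application of the arithmetic--geometric mean inequality to the distribution functions and then integrate. Your treatment of the equality case is in fact more careful than the paper's, which simply asserts that equality in the AM--GM inequality forces $F_1=\cdots=F_n$ without explicitly passing from ``$h=0$ almost everywhere'' to ``$h\equiv0$'' via right-continuity.
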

\begin{proof}
Because the arithmetic mean is greater than the geometric mean,
\begin{equation}
1-u_1u_2...u_n\geq\frac{1}{n}\sum_{i=1}^n(1-u_i^n).\label{GAInq}
\end{equation}
This implies
$$\mathbb{E}[X_{(n)}] = \int_0^{\infty}[1-F_1(x)F_2(x)...F_n(x)]dx \geq \frac{1}{n} \sum_{i=1}^n
 \int_0^{\infty}[1-F_i^n(x)]dx = \bar M,$$
where $F_i$ denotes the distribution function of $X_i$. Further,
since (\ref{GAInq}) turns into an equality if and only if the
$u_i$'s are all equal, the lower bound of $\mathbb{E}[X_{(n)}]$ is
only attained when $F_1=\ldots=F_n$, that is when $X_1,\ldots,X_n$
are identically distributed.
\end{proof}

The first step in the proof of the upper bound is to reduce the
problem to the case of random variables concentrated on a finite
set of points. This is easily demonstrated by using the
approximation
$$X = \lim_{m\uparrow\infty}\left[
\sum_{l=1}^{m2^m}\frac{l-1}{2^m}1_{[(l-1)/2^m,l/2^m)}(X) +
m1_{[m,+\infty)}(X)\right].$$ Indeed, one only needs to apply the
Monotone Convergence Theorem to prove the following proposition.

\begin{proposition}
If Theorem \ref{MainTh} is true for random variables concentrated
on a finite set of points, then it must be true for general random
variables.\label{FiniteTh}
\end{proposition}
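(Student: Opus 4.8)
The plan is to turn the displayed dyadic approximation into a rigorous monotone limit and to push the two inequalities of Theorem~\ref{MainTh} through that limit. For $x\ge 0$ and $m\in\mathbb{N}$, set
$$\phi_m(x)=\sum_{l=1}^{m2^m}\frac{l-1}{2^m}\,1_{[(l-1)/2^m,\,l/2^m)}(x)+m\,1_{[m,+\infty)}(x).$$
Three elementary facts about $\phi_m$ do all the work: it is a Borel function taking only finitely many values; it is nondecreasing, hence commutes with maxima, $\phi_m(a_1\vee\cdots\vee a_k)=\phi_m(a_1)\vee\cdots\vee\phi_m(a_k)$; and $\phi_m(x)\uparrow x$ as $m\uparrow\infty$ (in fact $\phi_m(x)\le\phi_{m+1}(x)\le x$ for every $x$).

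First I would fix $m$ and put $Y_i=\phi_m(X_i)$ and $Y_i^{\,j}=\phi_m(X_i^{\,j})$, suppressing $m$ from the notation. Because $\phi_m$ is a fixed Borel map, $Y_1,\dots,Y_n$ are independent, each $Y_i$ is concentrated on the finite set $\{0,2^{-m},2\cdot 2^{-m},\dots,m\}$, and for each $i$ the variables $Y_i^{\,1},\dots,Y_i^{\,n}$ are independent copies of $Y_i$. Using the commutation with maxima, $Y_i^{(n)}=\phi_m(X_i^{(n)})$ and $Y_{(n)}=\phi_m(X_{(n)})$, so as $m\uparrow\infty$ these increase pointwise to $X_i^{(n)}$ and to $X_{(n)}$. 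Since all first moments are finite (so $M_i\le\sum_{j}\mathbb{E}[X_i^{\,j}]<\infty$ and $\mathbb{E}[X_{(n)}]\le\sum_i\mathbb{E}[X_i]<\infty$), the Monotone Convergence Theorem yields
$$\mathbb{E}[Y_i^{(n)}]\uparrow M_i,\qquad \mathbb{E}[Y_{(n)}]\uparrow\mathbb{E}[X_{(n)}],$$
and hence also $\frac1n\sum_{i=1}^n\mathbb{E}[Y_i^{(n)}]\uparrow\bar M$ and $\bigvee_{i=1}^n\mathbb{E}[Y_i^{(n)}]\uparrow M_{(n)}$.

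Now I would invoke the hypothesis: Theorem~\ref{MainTh}, applied to the finitely supported family $Y_1,\dots,Y_n$ (whose $M$-values are $\mathbb{E}[Y_i^{(n)}]$), gives
$$\frac1n\sum_{i=1}^n\mathbb{E}[Y_i^{(n)}]\ \le\ \mathbb{E}[Y_{(n)}]\ \le\ \frac1n\sum_{i=1}^n\mathbb{E}[Y_i^{(n)}]+\frac{n-1}{n}\bigvee_{i=1}^n\mathbb{E}[Y_i^{(n)}].$$
Letting $m\uparrow\infty$ and using the four convergences just recorded turns this into $\bar M\le\mathbb{E}[X_{(n)}]\le\bar M+\frac{n-1}{n}M_{(n)}$, which is (\ref{MainInq}); the special case $M_i=M$ is immediate since then $\bar M=M_{(n)}=M$. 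The only point needing genuine care --- and the one I would flag as the main obstacle --- is the bookkeeping around $\phi_m$: checking that it preserves independence and the copy structure, and that it commutes with the maxima, so that the identities $Y_i^{(n)}=\phi_m(X_i^{(n)})$ and $Y_{(n)}=\phi_m(X_{(n)})$ hold. Once these are secured, every passage to the limit is monotone with a finite limiting value, so no integrability obstruction can occur.
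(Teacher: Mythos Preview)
Your proposal is correct and is precisely the approach the paper intends: it merely states the dyadic approximation and says ``one only needs to apply the Monotone Convergence Theorem,'' whereas you have filled in the details (monotonicity and commutation of $\phi_m$ with maxima, preservation of independence and of the copy structure, and the four monotone limits). There is no substantive difference between the two arguments.
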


Having reduced the problem to one that only involves random
variables concentrated on a finite set of points, we shall find
the maximum possible value of $\mathbb{E}[X_{(n)}]$ when we fix
$\mathbb{E}[X_i^{(n)}]=M_i$, $i=1,2,...,n$, by continually
creating new sets of $n$-assemblies which maintain the property
$\mathbb{E}[X_i^{(n)}]=M_i$, but increase, or at least do not
decrease, $\mathbb{E}[X_{(n)}]$.

The main step in achieving the upper bound announced in Theorem
\ref{MainTh} is to prove that the problem can be reduced to one
that involves random variables that take exactly one non-zero
value. This is done by showing that, for any random variable, two
non-zero adjacent points can be coalesced into a single point;
that is, a random variable which takes values $x_1,\ldots,x_r$
($x_1<\ldots<x_r$) with probabilities $p_1,\ldots,p_r$
respectively, can be replaced by a random variable with masses
$p_1,\ldots,p_{i-1},p_i+p_{i+1},p_{i+2},\ldots,p_r$ at
$x_1,\ldots,x_{i-1},x,x_{i+2},\ldots,x_r$, for a carefully chosen
$x$. This is initially done in Proposition \ref{coalesce} for
adjacent points not separated by points from other random
variables, and later extended to the general case in Proposition
\ref{reduce} yielding the following Theorem.

\begin{proposition}
If Theorem \ref{MainTh} is true for random variables which take
exactly one non-zero value, then it must be true for general
random variables.\label{TwoPointTh}
\end{proposition}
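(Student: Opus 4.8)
The plan is to combine Proposition~\ref{FiniteTh} with a repeated \emph{merging} of atoms. By Proposition~\ref{FiniteTh} we may assume each $X_k$ is supported on a finite set, which we write as $\{0=x_{k,0}<x_{k,1}<\dots<x_{k,r_k}\}$ (adjoining $0$ with zero mass if needed). I will exhibit operations on the assembly that keep every $M_i=\mathbb{E}[X_i^{(n)}]$ fixed, never decrease $\mathbb{E}[X_{(n)}]$, and strictly lower the total number $\sum_k r_k$ of non-zero atoms as long as some $r_k\geq 2$. Iterating until every variable has exactly one non-zero value produces an assembly to which the hypothesis of the proposition applies, giving the upper bound of Theorem~\ref{MainTh} for that assembly; since $\bar M$ and $M_{(n)}$ are the same as for the original assembly and $\mathbb{E}[X_{(n)}]$ was never decreased along the way, the same upper bound holds for the original $X_1,\dots,X_n$. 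The lower bound $\mathbb{E}[X_{(n)}]\geq\bar M$ requires none of this — it is Proposition~\ref{LwrBnd} — so both halves of Theorem~\ref{MainTh} follow once the reduction is in place.

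The elementary operation is as follows. Fix $k$ and two consecutive atoms $x_{k,j}<x_{k,j+1}$ of $X_k$ carrying masses $p_j,p_{j+1}$, and replace them by a single atom of mass $q:=p_j+p_{j+1}$ placed at a point $x\in[x_{k,j},x_{k,j+1}]$, all else unchanged; write $\widetilde X_k(x)$ for the result and $M_k(x)=\mathbb{E}[\widetilde X_k(x)^{(n)}]$. Sliding the merged mass from the left endpoint to the right endpoint carries $\widetilde X_k(x)$ from a law stochastically below $X_k$ to one stochastically above it, so $x\mapsto M_k(x)$ is continuous and nondecreasing and, by the intermediate value theorem, equals the original $M_k$ for some $x=x^\star$; fix that $x^\star$. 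Only $F_k$ changes, and only on the interval $I:=[x_{k,j},x_{k,j+1})$, so $M_k$ is preserved by construction, and it remains to check that $\mathbb{E}[X_{(n)}]=\int_0^\infty\bigl(1-F_k(s)\prod_{i\neq k}F_i(s)\bigr)\,ds$ does not drop — a comparison that lives entirely on $I$. This is packaged as Proposition~\ref{coalesce} in the obstruction-free case and Proposition~\ref{reduce} in general.

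Writing $\alpha=F_k(x_{k,j}^-)\leq\beta:=\alpha+p_j\leq\gamma:=\alpha+q$ for the three relevant values of $F_k$ and $\lambda=(x^\star-x_{k,j})/(x_{k,j+1}-x_{k,j})\in[0,1]$, the constraint $M_k(x^\star)=M_k$ reads $\beta^n=\lambda\alpha^n+(1-\lambda)\gamma^n$, and a short computation then expresses the change in $\mathbb{E}[X_{(n)}]$ over $I$, in the case where no atom of any other variable lies strictly inside $(x_{k,j},x_{k,j+1})$ (so that $\Pi:=\prod_{i\neq k}F_i$ is constant on $I$), as a nonnegative multiple of $\Pi\bigl[(\gamma-\alpha)(\gamma^n-\beta^n)-(\gamma-\beta)(\gamma^n-\alpha^n)\bigr]$. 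This bracket is $\geq 0$ because the secant slopes of the convex function $u\mapsto u^n$ satisfy $(\gamma^n-\beta^n)/(\gamma-\beta)\geq(\gamma^n-\alpha^n)/(\gamma-\alpha)$ when $0\leq\alpha\leq\beta\leq\gamma$; this is the content of Proposition~\ref{coalesce}. The genuinely delicate point — and the main obstacle — is the situation of Proposition~\ref{reduce}, where atoms of other variables do lie between $x_{k,j}$ and $x_{k,j+1}$, so $\Pi$ is no longer constant on $I$ and the clean one-variable inequality does not apply verbatim; I would handle this by a continuity argument, nudging the interfering atoms to the endpoints of $I$ and passing to the limit, or by an induction that clears one interfering atom at a time, reducing in either case to the obstruction-free situation. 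Once Proposition~\ref{reduce} is available, finiteness of the supports guarantees the merging process terminates, and the reduction is complete.
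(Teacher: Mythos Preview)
Your overall architecture --- reduce to finite support via Proposition~\ref{FiniteTh}, then repeatedly coalesce adjacent non-zero atoms of one variable at a time while preserving every $M_i$ and not decreasing $\mathbb{E}[X_{(n)}]$ --- is exactly the paper's, and your treatment of the obstruction-free case (Proposition~\ref{coalesce}) via the secant-slope inequality for $u\mapsto u^n$ is correct and clean.

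The gap is in the obstructed case, and it is a real one. Your elementary operation fixes a \emph{single} merged location $x^\star\in[x_{k,j},x_{k,j+1}]$ determined by $\beta^n=\lambda\alpha^n+(1-\lambda)\gamma^n$, and you then need $\mathbb{E}[X_{(n)}]$ not to drop. When another variable has an atom strictly between $x_{k,j}$ and $x_{k,j+1}$ this can fail outright: take $n=2$, $X_1$ with $\mathbb{P}[X_1=0.01]=0.9$, $\mathbb{P}[X_1=1]=0.1$, and $X_2\equiv 0.5$. Then $M_1=0.1981$, your $x^\star=0.1981$, and $\mathbb{E}[X_1\vee X_2]$ falls from $0.55$ to $\max(x^\star,0.5)=0.5$. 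So the single-point merge is simply the wrong move here, and neither of your proposed repairs works: nudging $X_2$'s atom to an endpoint of $I$ changes $M_2$, so there is no limit to pass to that recovers the original data; and ``clearing one interfering atom at a time'' by merging atoms of the obstructing variable is circular, since that merge may itself be obstructed (and in the example above $X_2$ has nothing to merge with anyway).

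What Proposition~\ref{reduce} actually does is different in kind. It keeps the two atoms separate, sliding the one at $a$ to a free position $u$ and the one at $b$ to $V(u)=b-\lambda_n(u-a)$ along the one-parameter curve determined by $M_k$-preservation, and then uses the convexity of $u\mapsto\mathbb{E}[u\vee Y]$ (Lemma~\ref{convex}) to show that the right-derivative of $\phi(u)=p\,\mathbb{E}[u\vee Y]+q\,\mathbb{E}[V(u)\vee Y]$ is monotone. This forces one of three outcomes --- push $u$ down to the left boundary $l$, push $u$ up, or merge both atoms at the conditional mean $\omega=\mathbb{E}[X_k^{(n)}\mid l<X_k^{(n)}<r]$ --- each of which increases $\mathbb{E}[X_{(n)}]$ and leaves at most one atom of $X_k$ in the open interval $(l,r)$. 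The idea you are missing is precisely this extra degree of freedom: by allowing the two atoms to move independently (subject only to the $M_k$ constraint) rather than being pinned at the unique $x^\star$, one can adapt to the location of the obstructions and always find a direction of improvement.
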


\begin{proposition}
Let $X_1,\ldots,X_n$ be independent random variables with the
property that $\mathbb{E}[X_i^{(n)}]=M_i$, $i=1,2,...,n$. If each
of $X_1,\ldots,X_n$ takes exactly one non-zero value, then
$$\bar M\leq\mathbb{E}[X_{(n)}]\leq\bar
M+\frac{n-1}{n}M_{(n)}.$$\label{TwoPointProp}
\end{proposition}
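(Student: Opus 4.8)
The plan is to exploit the fully explicit form of $\mathbb{E}[X_{(n)}]$ for two-valued summands, reduce the upper bound to a scalar inequality in the atom probabilities, and then settle that inequality; the lower bound already comes for free from Proposition~\ref{LwrBnd}.

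First I would fix coordinates. Since each $X_i$ takes exactly one non-zero value, write $\mathbb{P}(X_i=x_i)=1-p_i$ and $\mathbb{P}(X_i=0)=p_i$ with $x_i>0$ and $p_i\in[0,1)$; then $M_i=x_i(1-p_i^{\,n})$. Relabel so that $x_1\le\cdots\le x_n$ (ties are harmless and may be broken arbitrarily, or removed by a perturbation argument). Conditioning successively on the summand with the largest remaining atom — precisely the recursion used in the second toy example — yields
$$\mathbb{E}[X_{(n)}]=\sum_{i=1}^{n}(1-p_i)\,x_i\prod_{j>i}p_j=\sum_{i=1}^{n}\frac{M_i}{S_i}\,q_i,\qquad S_i:=\sum_{k=0}^{n-1}p_i^{\,k},\quad q_i:=\prod_{j>i}p_j,$$
valid also when some $p_i=0$ (in which case $S_i=1$).

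Subtracting $\bar M=\tfrac1n\sum_iM_i$ from this closed form and using $0\le M_i\le M_{(n)}$ term by term — keeping the terms with $q_i/S_i\ge 1/n$ and discarding the remaining (non-positive) ones — reduces the upper bound to the purely scalar statement
$$\sum_{i=1}^{n}\Bigl(\frac{q_i}{S_i}-\frac1n\Bigr)^{\!+}\ \le\ \frac{n-1}{n}\qquad\text{for all }p_1,\dots,p_n\in[0,1).$$
Here it is convenient to parametrise by $0=q_0\le q_1\le\cdots\le q_n=1$ with $p_i=q_{i-1}/q_i$ (taking $p_1=0$ costs nothing, as it only enlarges the first term), so that $\phi_i:=q_i/S_i=q_i\,h(q_{i-1}/q_i)$ where $h(r)=\tfrac{1-r}{1-r^{\,n}}$ is decreasing with $h(0)=1$ and $h(r)\to 1/n$ as $r\to 1$.

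Proving this scalar inequality is where I expect the real work. The mechanism is that the $\phi_i$ cannot all be large simultaneously: if $i<i'$ then $p_{i'}$ is one of the factors of $q_i$, so $p_{i'}\ge q_i/q_{i'}\ge\phi_i$, whence $S_{i'}\ge 1+\phi_i+\cdots+\phi_i^{\,n-1}$ and therefore $\phi_{i'}\le h(\phi_i)$. Thus once an early $\phi_i$ is near $1$ every later one is pushed down toward $1/n$, and the extremal configuration is the degenerate limit of the second toy example, where exactly one index contributes $1-1/n$ and all others contribute $0$. Turning this into a proof is the main obstacle: one must quantify the cascade $\phi_{i'}\le h(\phi_i)$ (for all $i<i'$, not just consecutive ones), and combine it with the fact that $\sum_i(q_i^{\,n}-q_{i-1}^{\,n})=1$ bounds the available ``budget'' — a naive per-term estimate of the form $\phi_i-1/n\le\frac{n-1}{n}(q_i^{\,n}-q_{i-1}^{\,n})$ is false, so the bookkeeping has to be more global. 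Should that route prove unwieldy, I would fall back on induction on $n$: condition on the summand attaining $M_{(n)}$, split the rest into those with larger and those with smaller atom, and apply the $(n-1)$-case; but the crude version of this loses too much precisely when that summand is nearly deterministic, so it again needs the sharp estimate at exactly the trade-off point described above.
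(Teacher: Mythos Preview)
Your reduction to the scalar inequality $\sum_{i=1}^{n}\bigl(q_i/S_i-1/n\bigr)^{+}\le (n-1)/n$ is correct, but the proposal stops exactly where the content lies: you explicitly flag that ``turning this into a proof is the main obstacle,'' observe that the obvious per-term estimate fails, and offer only a heuristic cascade $\phi_{i'}\le h(\phi_i)$ and a vague induction fallback. Neither sketch is close to a proof. The cascade bound controls each later $\phi_{i'}$ in terms of a single earlier $\phi_i$, but summing such pairwise bounds does not obviously yield the global constant $(n-1)/n$; and the induction you describe runs into precisely the difficulty you name, since conditioning on the summand realising $M_{(n)}$ does not leave behind an $(n-1)$-instance of the same problem (the remaining $M_j$ are expectations of maxima of $n$ copies, not $n-1$). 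So as written there is a genuine gap: the key inequality is stated but not established.

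The paper avoids your scalar inequality altogether by first \emph{changing the configuration}, using Proposition~\ref{coalesce} (and a few further swaps) to pass to one in which the variable with the largest $M$-value is degenerate, $X_n\equiv M_n=M_{(n)}$, and every other $x_k$ exceeds $M_n$. Only then does the paper run your recursion, but peeling off the largest $x_k$ rather than working with the full sum: $\mathbb{E}[X_{(n-1)}\vee M_n]=p_{n-1}\,\mathbb{E}[X_{(n-2)}\vee M_n]+(1-p_{n-1})M_{n-1}/(1-p_{n-1}^{\,n})$. The point of the preliminary reduction is that it guarantees $\mathbb{E}[X_{(k-1)}\vee M_n]\ge M_n>M_k$ at every stage, so one is always in the regime $v<u$ where $\phi(p)=up+v(1-p)/(1-p^n)$ is monotone on $[0,1]$ with supremum $u+v/n$; this immediately gives the telescoping bound $\mathbb{E}[X_{(n)}]\le M_n+\tfrac1n\sum_{k<n}M_k=\bar M+\tfrac{n-1}{n}M_{(n)}$. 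In your coordinates this is exactly the statement that one may take all $p_k\to1$ for $k\ne n$, which is the extremal limit you identified; the missing idea is that the reduction step is what makes the one-variable optimisation legitimate at each stage.
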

\begin{proof}
For $k\in\{1,\ldots,n\}$, we denote by $x_k$ the non-zero value of
$X_k$ and $p_k = \mathbb{P}[X_k=0]$ . The next four points
successively simplify the problem.
\begin{enumerate}

\item If $\mathbb{P}[X_k=0]>0$, for all $k$ (i.e. all $X_k$'s
place a positive mass at 0), then applying Proposition
\ref{coalesce} to $X_1$ (assumed without loss of generality to
have the smallest non-zero value) and $Y = \bigvee_{k=2}^nX_k$
shows that $\mathbb{E}[X_{(n)}]$ is increased if $X_1$ is replaced
by $M_1 = \mathbb{E}[X_1^{(n)}]$. Therefore we may assume that at
least one of the $X_k$'s is constant.

\item Without loss of generality, we may assume that $X_n$ is the
largest constant random variable. $X_n$ may be one of any number
of constant random variables, all equal, that are larger than any
other constant random variable. Assume that at least one constant
random variable is strictly smaller than $X_n$. Let it be $a$
($a<X_n$). Since $\mathbb{E}[a\vee X_n\vee Y] = \mathbb{E}[Z\vee
X_n\vee Y]$, where $Z$ is such that
$\mathbb{P}[Z=0]=(1-a/X_n)^{1/n}$ and
$\mathbb{P}[Z=X_n]=1-(1-a/X_n)^{1/n}$ ($a\vee X_n=X\vee X_n=X_n$),
we may assume that all $X_k$'s satisfying $M_k<M_n$, are
non-constant (place a positive mass on 0).

\item If one other $X_k$, say $X_{n-1}$, satisfies $X_{n-1} =
M_{n-1} = M_n$, then replacing $X_{n-1}$ by $Y$ such that
$\mathbb{P}[Y=0]=q>0$, $\mathbb{P}[Y=y]=1-q$ and
$y=M_n/(1-q^n)>M_n$, we get
\begin{eqnarray*}
\lefteqn{\mathbb{E}[X_{(n-2)}\vee Y\vee M_n]}\\
& = & q\mathbb{E}[X_{(n-2)}\vee M_n] + (1-q)\mathbb{E}[X_{(n-2)}\vee y\vee M_n]\\
& \geq & \mathbb{E}[X_{(n-2)}\vee M_n] = \mathbb{E}[X_{(n)}].
\end{eqnarray*}
Therefore we may assume that the random variable with the largest
$M_k$ (say $M_n$) is constant and all others are non-constant.

\item Without loss of generality, we may assume that
$x_1\leq\ldots\leq x_{n-1}$. Recall that, for
$k\in\{1,\ldots,n-1\}$, $M_k = (1-p_k^n)x_k$. If $x_1\leq M_n$
then, with $Y$ such that $\mathbb{P}[Y=0]=q>0$,
$\mathbb{P}[Y=y]=1-q$ and $y=M_1/(1-q^n)>M_n$,
$$\mathbb{E}[Y\vee Z\vee M_n]
= q\mathbb{E}[Z\vee M_n] + (1-q)\mathbb{E}[y\vee Z\vee M_n] >
\mathbb{E}[X_{(n)}],$$ where $Z=\bigvee_{k=2}^{n-1}X_k$. Therefore
we may assume that $M_{(n-1)}\leq M_n<x_1\leq\ldots\leq x_{n-1}$.

\end{enumerate}
Now,
\begin{eqnarray*}
\mathbb{E}[X_{(n)}] & = & \mathbb{E}[X_{(n-2)}\vee X_{n-1}\vee
M_n]\\
& = & p_{n-1}\mathbb{E}[X_{(n-2)}\vee M_n] + (1-p_{n-1})x_{n-1}\\
& = & p_{n-1}\mathbb{E}[X_{(n-2)}\vee M_n] +
(1-p_{n-1})\frac{M_{n-1}}{1-p_{n-1}^n}
\end{eqnarray*}
Next we use the fact that the function $\phi(p) =
up+v\frac{1-p}{1-p^n}$ defined on $[0,1]$ (extended at 1 by
continuity), where $v<u$, increases from $v$ to $u+v/n$. Since
$\mathbb{E}[X_{(n-2)}\vee M_n]>M_{n-1}$, we immediately get that
$$\mathbb{E}[X_{(n)}]\leq\mathbb{E}[X_{(n-2)}\vee
M_n]+\frac{M_{n-1}}{n}.$$ Repeating the same argument, we get
$$\mathbb{E}[X_{(n)}]\leq\mathbb{E}[X_{(n-3)}\vee
M_n]+\frac{M_{n-2}+M_{n-1}}{n}\leq\ldots\leq
M_n+\frac{M_1+\ldots+M_{n-1}}{n}.$$
\end{proof}

\section{Comparison Results}

In this section we develop the tools required to obtain the bounds
of the previous section. However, these tools are important in
their own right. They enable us to increase the performance of an
$n$-assembly while keeping the performances of all similar
$n$-assemblies unchanged. We prove them for random variables that
are not necessarily concentrated on a finite set of points. We
also obtain a comparison result that allows to decrease the
performance of an $n$-assembly while keeping the performances of
all similar $n$-assemblies unchanged.

\begin{proposition}
If $X_1$ and $Y$ are independent, $\mathbb{P}[a\leq X_1\leq b]=p$
and $\mathbb{P}[a<Y<b]=0$ then we may replace $X_1$ by a random
variable $X_2$ such that $X_2=X_1$ outside $[a,b]$ and,
$\mathbb{P}[a\leq X_2\leq b]=\mathbb{P}[X_2=x]=p$, for some
$a<x<b$ with the property that
$\mathbb{E}[X_2^{(n)}]=\mathbb{E}[X_1^{(n)}]$ and
$\mathbb{E}[X_2\vee Y]\geq\mathbb{E}[X_1\vee Y]$.\label{coalesce}
\end{proposition}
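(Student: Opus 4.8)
The plan is to pass to distribution functions and coalesce the mass of $X_1$ lying in $[a,b]$ into a single atom, choosing the location of that atom by an intermediate value argument so as to keep $\mathbb{E}[X_1^{(n)}]$ unchanged, and exploiting $\mathbb{P}[a<Y<b]=0$ to control the effect on $\mathbb{E}[X_1\vee Y]$. Write $F$ and $H$ for the distribution functions of $X_1$ and $Y$, put $\alpha=F(a^{-})$ and $\beta=F(b)$ (so $\beta-\alpha=p$; if $p=0$ there is nothing to prove, so assume $\alpha<\beta$), and recall that for independent non-negative $A,B$ one has $\mathbb{E}[A\vee B]=\int_0^\infty(1-\mathbb{P}[A\le s]\mathbb{P}[B\le s])\,ds$ and $\mathbb{E}[A^{(n)}]=\int_0^\infty(1-\mathbb{P}[A\le s]^n)\,ds$. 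For $t\in[a,b]$ let $W_t$ have the law of $X_1$ outside $[a,b]$ together with a single atom of mass $p$ at $t$; its distribution function $G_t$ then equals $F$ off $[a,b]$, while on $[a,b]$ one has $G_t(s)=\alpha$ for $s<t$ and $G_t(s)=\beta$ for $s\ge t$ (using $\alpha+p=\beta$). The candidate replacement is $X_2=W_{t^*}$ with $x=t^*$ for a $t^*$ fixed below, coupled to $X_1$ by $X_2=X_1$ on $\{X_1\notin[a,b]\}$; then automatically $X_2=X_1$ off $[a,b]$, $X_2\perp Y$, and $\mathbb{P}[X_2=x]=\mathbb{P}[a\le X_2\le b]=p$, so only two analytic claims remain.

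\emph{Choice of $t^*$.} Since $G_t=F$ off $[a,b]$, $\mathbb{E}[W_t^{(n)}]-\mathbb{E}[X_1^{(n)}]=\int_a^b(F(s)^n-G_t(s)^n)\,ds$, and $\int_a^b G_t(s)^n\,ds=(t-a)\alpha^n+(b-t)\beta^n$ is affine and continuous in $t$, so as $t$ ranges over $[a,b]$ it runs over the whole interval with endpoints $(b-a)\beta^n$ (at $t=a$) and $(b-a)\alpha^n$ (at $t=b$). Because $\alpha\le F(s)\le\beta$ on $[a,b]$, the number $\int_a^b F(s)^n\,ds$ lies in that same interval, so the intermediate value theorem provides $t^*\in[a,b]$ with $\int_a^b G_{t^*}(s)^n\,ds=\int_a^b F(s)^n\,ds$, i.e.\ $\mathbb{E}[W_{t^*}^{(n)}]=\mathbb{E}[X_1^{(n)}]$. (One checks that $t^*\in(a,b)$ unless the mass of $X_1$ on $[a,b]$ already sits entirely at one of the endpoints, in which case $X_1$ is already of the desired form and nothing need be done.)

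\emph{The inequality $\mathbb{E}[X_2\vee Y]\ge\mathbb{E}[X_1\vee Y]$.} As $G_{t^*}=F$ off $[a,b]$, $\mathbb{E}[X_2\vee Y]-\mathbb{E}[X_1\vee Y]=\int_a^b(F(s)-G_{t^*}(s))H(s)\,ds$. The hypothesis $\mathbb{P}[a<Y<b]=0$ forces $H$ to be constant, equal to $h:=\mathbb{P}[Y\le a]\ge0$, on $[a,b)$, so this difference equals $h\int_a^b(F(s)-G_{t^*}(s))\,ds$, and it suffices to show $\int_a^b G_{t^*}\le\int_a^b F$. Write $t^*-a=u(b-a)$, $u\in[0,1]$, so that $\int_a^b G_{t^*}(s)\,ds=(b-a)(u\alpha+(1-u)\beta)$, while the constraint from the previous step reads $u\alpha^n+(1-u)\beta^n=\frac{1}{b-a}\int_a^b F(s)^n\,ds$. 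For each $s\in[a,b]$ write $F(s)=\lambda(s)\alpha+(1-\lambda(s))\beta$ with $\lambda(s)\in[0,1]$; convexity of $r\mapsto r^n$ gives $F(s)^n\le\lambda(s)\alpha^n+(1-\lambda(s))\beta^n$, and averaging over $[a,b]$ yields $u\alpha^n+(1-u)\beta^n\le\bar\lambda\alpha^n+(1-\bar\lambda)\beta^n$, where $\bar\lambda=\frac{1}{b-a}\int_a^b\lambda(s)\,ds$. Since $0\le\alpha<\beta$ implies $\alpha^n<\beta^n$, this forces $u\ge\bar\lambda$, whence $u\alpha+(1-u)\beta\le\bar\lambda\alpha+(1-\bar\lambda)\beta=\frac{1}{b-a}\int_a^b F(s)\,ds$, which is exactly $\int_a^b G_{t^*}\le\int_a^b F$.

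I expect the crux to be the second claim, and within it the observation that the constraint $\mathbb{E}[X_2^{(n)}]=\mathbb{E}[X_1^{(n)}]$ — an identity about $n$-th powers of distribution functions — pins the coalescence point $t^*$ far enough to the right (precisely, $u\ge\bar\lambda$) that the first-moment-type integral $\int_a^b F$ cannot decrease, and that this is exactly what convexity of $r\mapsto r^n$ delivers. The remaining ingredients — writing $G_t$ down explicitly, flattening $H$ via $\mathbb{P}[a<Y<b]=0$, and disposing of the degenerate endpoint cases — are routine bookkeeping.
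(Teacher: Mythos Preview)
Your proof is correct but follows a different route from the paper's. The paper works probabilistically: it expands the contribution of the interval $[a,b]$ to $\mathbb{E}[X_1^{(n)}]$ by conditioning on how many of the $n$ copies land in $[a,b]$, obtaining an explicit formula
\[
x\bigl(F(b)^n-F^-(a)^n\bigr)=\sum_{k=1}^n\binom{n}{k}p^kF^-(a)^{n-k}\,\mathbb{E}\bigl[X_1^{(k)}\mid a\le X_1^j\le b,\ j=1,\dots,k\bigr],
\]
so that $x$ is a convex combination of the conditional expectations $\mathbb{E}[X_1^{(k)}\mid\cdot]$, each of which is at least $\mathbb{E}[X_1\mid a\le X_1\le b]$ since the expected maximum of $k$ i.i.d.\ copies is nondecreasing in $k$. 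The inequality $\mathbb{E}[X_2\vee Y]\ge\mathbb{E}[X_1\vee Y]$ then drops out of the one-line identity $\mathbb{E}[X_2\vee Y]-\mathbb{E}[X_1\vee Y]=p\,\mathbb{P}[Y\le a]\bigl(x-\mathbb{E}[X_1\mid a\le X_1\le b]\bigr)$. You instead work analytically with distribution functions: the intermediate value theorem gives existence of $t^*$, and convexity of $r\mapsto r^n$ (applied pointwise to $F(s)$ written as a convex combination of $\alpha$ and $\beta$) forces $u\ge\bar\lambda$, which is equivalent to the same inequality $x\ge\mathbb{E}[X_1\mid a\le X_1\le b]$. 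Your approach avoids the binomial expansion and the order-statistics monotonicity fact, trading them for a clean convexity estimate; the paper's approach yields an explicit formula for $x$ and a more transparent probabilistic interpretation. Both are short, and the core inequality being proved is the same.
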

\begin{proof}
Equating the contributions to $\mathbb{E}[X_1^{(n)}]$ and
$\mathbb{E}[X_2^{(n)}]$ from the interval $[a,b]$ gives
$$x(F(b)^n-F^-(a)^n) = \sum_{k=1}^n{n\choose k}p^kF^-(a)^{n-k}
\mathbb{E}[X_1^{(k)}|a\leq X_1^j\leq b,j=1,\ldots,k]$$ where $F$
is the distribution function of $X_1$ and
$F^-(x)=\lim_{y\uparrow x}F(y)$. We thus see that $x$ is a convex
combination of the $n$ expectations on the right hand side and
thus, in particular, that $x\geq\min_{1\leq k\leq
n}\mathbb{E}[X_1^{(k)}|a\leq X_1\leq b]=\mathbb{E}[X_1|a\leq
X_1\leq b]$. Finally
$$\mathbb{E}[X_2\vee Y] - \mathbb{E}[X_1\vee Y] =
p\mathbb{P}[Y<a](x-\mathbb{E}[X_1|a\leq X_1\leq b])$$ and, as
previously observed, the second expression is nonnegative.
\end{proof}

If we allow $\mathbb{P}[a<Y<b]>0$ then the coalescing point is no
longer necessarily in the interval $[a,b]$ as is demonstrated in
the next result.

\begin{proposition}
Assume that $X_1$ and $Y$ are independent and that the entire mass
$X_1$ places on an interval $(l,r)$ is concentrated on two values
$a$ and $b$ within it: $0\leq l<a<b<r$,
$\mathbb{P}[l<X_1<r]=\mathbb{P}[X_1\in\{a,b\}]$,
$p=\mathbb{P}[X_1=a]>0$ and $q=\mathbb{P}[X_1=b]>0$. Then there
exists a random variable $X_2$ s.t.
$\mathbb{E}[X_1^{(n)}]=\mathbb{E}[X_2^{(n)}]$, $\mathbb{E}[X_1\vee
Y]<\mathbb{E}[X_2\vee Y]$ and the mass $X_2$ places on the
interval $(l,r)$ is concentrated on at most one single value
within it.\label{reduce}
\end{proposition}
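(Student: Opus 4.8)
The plan is to change $X_1$ only by moving its two masses on $(l,r)$ continuously while preserving $\mathbb{E}[X^{(n)}]$, and to run this movement until the two masses collapse into a single point — possibly leaving one of them stranded at an endpoint $l$ or $r$, which does not lie inside the open interval. Write $F$ and $G$ for the distribution functions of $X_1$ and $Y$; on $(l,r)$ the function $F$ is the step function equal to $c:=F(l)$ on $[l,a)$, to $c+p$ on $[a,b)$ and to $c+p+q$ on $[b,r)$, and both $\mathbb{E}[X^{(n)}]=\int_0^\infty(1-F^n)$ and $\mathbb{E}[X\vee Y]=\int_0^\infty(1-FG)$ depend on the candidate random variable only through the shape of $F$ on $(l,r)$.

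For the movement I keep the masses $p$ and $q$ and let their locations be $a(u)$ and $b(u)$. The constraint $\mathbb{E}[X_u^{(n)}]=\mathbb{E}[X_1^{(n)}]$, differentiated in $u$, reads $((c+p)^n-c^n)\,\dot a+((c+p+q)^n-(c+p)^n)\,\dot b=0$; both coefficients are positive constants (independent of the locations), so the two atoms must move in opposite directions at the fixed relative rate $\rho:=\frac{(c+p)^n-c^n}{(c+p+q)^n-(c+p)^n}>0$, and along the resulting curve $\frac{d}{du}\mathbb{E}[X_u\vee Y]=\bigl(pG(a(u))-\rho q\,G(b(u))\bigr)\dot a$. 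I would pick the sign of $\dot a$ so that this derivative is $\ge 0$ at $u=0$. If $\dot a>0$, then $a$ rises and $b$ falls; since $G$ is non-decreasing, $G(a(u))$ only increases and $G(b(u))$ only decreases, so $pG(a(u))-\rho q\,G(b(u))$ stays $\ge 0$ and the two atoms meet at a common point $x^{*}\in(a,b)\subseteq(l,r)$. If $\dot a<0$, then $a$ falls and $b$ rises; the same monotonicity keeps $pG(a(u))-\rho q\,G(b(u))\le 0$, so the derivative stays $\ge 0$, but now the atoms cannot meet, hence one of them reaches $l$ or $r$ first, leaving a single atom strictly inside $(l,r)$. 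In either outcome $\mathbb{E}[X_2^{(n)}]=\mathbb{E}[X_1^{(n)}]$, the mass $X_2$ places on $(l,r)$ is concentrated at one point, and $\mathbb{E}[X_2\vee Y]\ge\mathbb{E}[X_1\vee Y]$.

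What remains, and what I expect to be the real obstacle, is to upgrade the last inequality to a strict one. The movement already produces a genuine gain whenever $G$ is not locally constant on the part of $[l,r]$ swept by the two atoms; and when $\mathbb{P}[a<Y<b]=0$ a strict conclusion follows directly from Proposition \ref{coalesce} applied to $[a,b]$ — its coalescing point automatically lies in $(a,b)\subset(l,r)$, and its gain is positive once $\mathbb{P}[Y<a]>0$, the strict convexity of $t\mapsto t^n$ together with $p,q>0$ forcing that point to exceed the barycentre $\frac{pa+qb}{p+q}$. The genuinely awkward configurations are those in which $Y$ is positioned so far above the interval that no local rearrangement of $X_1$'s mass changes $\mathbb{E}[X\vee Y]$ at all; these have to be handled by a separate construction that keeps almost all of the $(l,r)$-mass at a single point of $(l,r)$ but diverts a minute piece of mass to a very large value (beyond $r$), the resulting far-away atom strictly lifting $\mathbb{E}[X\vee Y]$ while $\mathbb{E}[X^{(n)}]$ is restored at negligible cost; verifying that this balancing can always be arranged — which is where the hypotheses $p,q>0$ and $a>l\ge 0$ are used — is the crux of the argument. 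Assembling these cases gives the proposition.
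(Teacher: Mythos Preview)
Your core construction is exactly the paper's: replace the atoms $(a,p)$ and $(b,q)$ by $(u,p)$ and $(V(u),q)$ with $V(u)=b-\lambda_n(u-a)$, $\lambda_n=\dfrac{F(a)^n-F(l)^n}{F(b)^n-F(a)^n}$ (your $\rho$), so that $\mathbb{E}[X_2^{(n)}]$ is preserved; then study $\phi(u)=p\,\mathbb{E}[u\vee Y]+q\,\mathbb{E}[V(u)\vee Y]$, whose right derivative $\phi'_+(u)=pG(u)-q\lambda_n G_-(V(u))$ is non-decreasing. The paper's three cases are your two directions plus the boundary case, and the termination (merge at $\omega$ or push one atom to an endpoint) matches your description.

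Where you diverge is in the strictness. The proposition is stated immediately after the sentence ``If we allow $\mathbb{P}[a<Y<b]>0$\ldots'', and the paper's proof tacitly works under that hypothesis: it uses $G_-(b)\ge\mathbb{P}[a<Y<b]>0$ to conclude that $\phi'_+(a)=0$ forces $G(a)>0$. With that in hand the boundary case is settled by merging at $\omega$ and observing
\[
\phi'_+(\omega)\;\ge\;(p-q\lambda_n)\,G(\omega)\;>\;0,
\]
the factor $p-q\lambda_n>0$ being precisely the strict-convexity inequality for $t\mapsto t^n$ that you already invoke. Thus under the intended hypothesis your own movement already gives the strict gain in every case, and the ``awkward configurations'' (where $Y$ lies entirely above the interval) simply do not occur; the far-away-mass construction you sketch is unnecessary. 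You are right, however, that the proposition \emph{as literally stated} (with no assumption on $Y$) is not what the paper's argument proves; the missing hypothesis $\mathbb{P}[a<Y<b]>0$ is implicit. One small technical point: be careful to distinguish $G$ from $G_-$ in your derivative formula, since $Y$ may have atoms in $(a,b)$ --- the paper's Lemma~\ref{convex} handles this, and it is what makes $\phi'_+$ genuinely monotone rather than merely ``monotone where $G$ is continuous''.
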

\begin{proof}
Let $X_2 = X_11_{X_1\leq l} + u1_{X_1=a} + V(u)1_{X_1=b} +
X_11_{X_1\geq r}$, where $u$ is a free parameter,
$$V(u) = b-\lambda_n(u-a)\mbox{ and }\lambda_n = \frac{F(a)^n-F(l)^n}{F(b)^n-F(a)^n}.$$
Then $\mathbb{E}[X_2^{(n)}]=\mathbb{E}[X_1^{(n)}]$ and
$\mathbb{E}[X_2\vee Y] = \mathbb{E}[X_1\vee Y] + \phi(u) -
\phi(a)$, where
$$\phi(u) = p\mathbb{E}[u\vee Y] + q\mathbb{E}[V(u)\vee Y].$$
Using Lemma \ref{convex} below, we get that $\phi^\prime_+(u) =
pG(u) - q\lambda_nG_-(V(u))$ is non-decreasing and that
$\phi^\prime_+(a) = pG(a) - q\lambda_nG_-(b)$ is not zero unless
$G(a)>0$ (since $G_-(b)\geq\mathbb{P}[a<Y<b]>0$).

Therefore there are three possible scenarios, either
$\phi^\prime_+(a)<0$, $\phi^\prime_+(a)>0$ or,
$\phi^\prime_+(a)=0$ and $G(a)>0$. In the first case,
$\mathbb{E}[X_2\vee Y]$ decreases on $[l,a]$ and letting $u=l$
achieves the objective. In the second case, $\mathbb{E}[X_2\vee
Y]$ increases on $[a,r]$ and it suffices to let $u=r$.

Finally, suppose $\phi^\prime_+(a)=0$ and $G(a)>0$. Let $\omega =
\mathbb{E}[X_1^{(n)}|l<X_1^{(n)}<r]$. Then $a<\omega<b$,
$V(\omega)=\omega$ and
$$\phi^\prime_+(\omega) \geq
G(\omega)\frac{pq}{F(b)^n-F(a)^n}\sum_{k=0}^{n-1}F(a)^k\left[F(b)^{n-k-1}
- F(l)^{n-k-1}\right] > 0.$$ Here it suffices to let $u=\omega$
($a$ and $b$ are merged into $\omega$).

\end{proof}

\begin{lemma}
For any positive random variable $Y$ with distribution function
$G$ and differentiable function $h$, $\gamma(u) =
\mathbb{E}[h(u)\vee Y]$ admits left and right derivatives (that
differ on a set that is at most countable):
$$\gamma^\prime_\pm(u) = \left\{\begin{array}{lcl}
h^\prime(u)G_\pm(h(u))&\mbox{ if }&h^\prime(u)\geq0\\
h^\prime(u)G_\mp(h(u))&\mbox{ if }&h^\prime(u)<0\\
\end{array}\right.$$
\label{convex}
\end{lemma}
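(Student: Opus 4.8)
The plan is to compute the one-sided derivatives of $\gamma(u)=\mathbb{E}[h(u)\vee Y]$ by writing the maximum explicitly in terms of the distribution function of $Y$. First I would record the elementary identity, valid for any constant $c\geq 0$,
$$\mathbb{E}[c\vee Y] = c\,G(c) + \int_{[c,\infty)} y\,dG(y) = c\,G(c) + \mathbb{E}[Y] - \int_{[0,c)} y\,dG(y),$$
or, integrating by parts (equivalently, writing $\mathbb{E}[c\vee Y]=\int_0^\infty \mathbb{P}[c\vee Y>s]\,ds$), the cleaner form
$$\mathbb{E}[c\vee Y] = c + \int_c^\infty (1-G(s))\,ds = \mathbb{E}[Y] + \int_0^c G(s)\,ds.$$
The last expression is the one to use: it exhibits $\mathbb{E}[c\vee Y]$ as $\mathbb{E}[Y]$ plus the integral of the monotone bounded function $G$ from $0$ to $c$. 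Substituting $c=h(u)$ gives $\gamma(u)=\mathbb{E}[Y]+\Psi(h(u))$ where $\Psi(c)=\int_0^c G(s)\,ds$.

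Next I would differentiate the composition $\Psi\circ h$ one-sidedly. The function $\Psi$ is convex (its derivative $G$ is non-decreasing), Lipschitz, and everywhere differentiable from the left and right with $\Psi'_+(c)=G(c)=G_+(c)$ and $\Psi'_-(c)=G_-(c)$, the left-limit of $G$; these two agree except at the at-most-countably many atoms of $Y$. Since $h$ is differentiable, the chain rule for one-sided derivatives of a composition with a convex outer function yields: if $h'(u)>0$ then $u\mapsto h(u)$ is increasing through $h(u)$, so $\gamma'_+(u)=\Psi'_+(h(u))h'(u)=h'(u)G_+(h(u))$ and $\gamma'_-(u)=h'(u)G_-(h(u))$; if $h'(u)<0$ the direction is reversed and a right increment in $u$ corresponds to a left increment in $c=h(u)$, giving $\gamma'_+(u)=h'(u)G_-(h(u))$ and $\gamma'_-(u)=h'(u)G_+(h(u))$; and if $h'(u)=0$ both one-sided derivatives are $0$ by the Lipschitz bound $|\Psi(c')-\Psi(c)|\le|c'-c|$. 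This is exactly the claimed formula, with $G_\pm$ denoting the right/left limits of $G$. Finally, the set where $\gamma'_+\neq\gamma'_-$ is contained in $\{u: G_+(h(u))\neq G_-(h(u))\}\cup\{u:h'(u)=0\}$ intersected appropriately; since the discontinuity set $D$ of $G$ is countable and $h$ is differentiable (hence, away from its critical points, locally a diffeomorphism onto an interval), $h^{-1}(D)$ restricted to $\{h'\neq 0\}$ is countable, which gives the countability assertion. (One should note the boundary behaviour at $u$ where $h(u)=0$: there $G_-(0)=0$ while $G_+(0)=G(0)=\mathbb{P}[Y=0]$, and since $Y\ge 0$ the formula still reads correctly.)

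The main obstacle I anticipate is purely bookkeeping rather than conceptual: keeping the four cases (sign of $h'$ crossed with left/right derivative) straight, and correctly justifying the countability of the ``bad'' set when $h$ is merely differentiable — $h'$ need not be continuous, and on the critical set $\{h'=0\}$ one cannot use the local-diffeomorphism argument, so one relies instead on the fact that both one-sided derivatives vanish there, making those points irrelevant to the discrepancy set. A secondary routine point is the interchange of differentiation and expectation implicit in the identity for $\mathbb{E}[c\vee Y]$; this is handled once and for all by the integral representation $\mathbb{E}[c\vee Y]=\mathbb{E}[Y]+\int_0^c G(s)\,ds$, after which no further interchange is needed and the result follows from real-variable facts about integrals of monotone functions.
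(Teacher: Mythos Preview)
The paper states Lemma~\ref{convex} without proof, so there is nothing to compare against; your proposal stands on its own. The approach --- writing $\mathbb{E}[c\vee Y]=\mathbb{E}[Y]+\int_0^c G(s)\,ds$, identifying $\Psi(c)=\int_0^c G(s)\,ds$ as convex with one-sided derivatives $G_\pm(c)$, and then applying a one-sided chain rule to $\Psi\circ h$ --- is correct and clean, and the case analysis on the sign of $h'(u)$ is handled properly.

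One technical quibble in the countability paragraph: the assertion that a merely differentiable $h$ is ``locally a diffeomorphism onto an interval'' at points where $h'\neq 0$ is false in general (think of $h(x)=x+x^2\sin(1/x)$ near $0$, which has $h'(0)=1$ but is not monotone on any neighbourhood of $0$). The conclusion you want is nevertheless true, and the fix is simpler than invoking diffeomorphisms: if $h'(u)\neq 0$, then directly from the definition of the derivative there is a punctured neighbourhood of $u$ on which $h(v)\neq h(u)$, so $u$ is an \emph{isolated} point of the level set $h^{-1}(\{h(u)\})$. Hence for each atom $d$ of $Y$ the set $\{u:h(u)=d,\ h'(u)\neq 0\}$ is a set of isolated points in $\mathbb{R}$, therefore countable, and the discrepancy set $\{u:h'(u)\neq 0,\ h(u)\in D\}$ is a countable union of such sets. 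With that adjustment the argument is complete.
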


The next proposition enables the reduction of
$\mathbb{E}[X_{(n)}]$ for given $M_i$, $i=1,\ldots,n$.
\begin{proposition}
Let $b$ be such that $\mathbb{P}[X_i\leq b]>0$, for all $i$. Then,
for any interval $I=[a,b]$, there are random variables
$Y_1,\ldots,Y_n$ such that
$\mathbb{E}[X_{(n)}]\geq\mathbb{E}[Y_{(n)}]$ and, for all
$i=1,\ldots,n$, $Y_i=X_i$ on $\{X_i\not\in I\}$,
$\mathbb{P}[a<Y_i<b]=0$ and
$\mathbb{E}[X_i^{(n)}]=\mathbb{E}[Y_i^{(n)}]$.\label{down}
\end{proposition}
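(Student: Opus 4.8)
The plan is to obtain $Y_1,\dots,Y_n$ by performing the same operation on all $n$ coordinates simultaneously: for each $i$, push every unit of mass that $X_i$ places on $[a,b]$ out to the two endpoints $a$ and $b$, in the unique proportion that leaves $\mathbb{E}[X_i^{(n)}]$ unchanged; the resulting change in $\mathbb{E}[X_{(n)}]$ then turns out to be a single instance of H\"older's inequality.

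In detail (and assuming $a<b$, there being nothing to do otherwise), let $F_i$ be the distribution function of $X_i$, write $F_i^-(s)=\lim_{y\uparrow s}F_i(y)$, recall that $\mathbb{E}[X_i^{(n)}]=\int_0^\infty(1-F_i(s)^n)\,ds<\infty$, and set
$$\gamma_i=\left(\frac1{b-a}\int_a^b F_i(s)^n\,ds\right)^{1/n}.$$
Since $F_i^-(a)\le F_i(s)\le F_i(b)$ for $s\in[a,b)$, one has $F_i^-(a)\le\gamma_i\le F_i(b)$, so there is a random variable $Y_i$ that agrees with $X_i$ on $\{X_i\notin[a,b]\}$, places mass $\gamma_i-F_i^-(a)$ at $a$ and mass $F_i(b)-\gamma_i$ at $b$, and puts no mass in $(a,b)$; we realise the $Y_i$ on a possibly enlarged probability space and take them to be independent. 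The distribution function of $Y_i$ then coincides with $F_i$ outside $[a,b)$ and equals the constant $\gamma_i$ on $[a,b)$, whence $\int_a^b F_{Y_i}(s)^n\,ds=(b-a)\gamma_i^n=\int_a^b F_i(s)^n\,ds$ and therefore $\mathbb{E}[Y_i^{(n)}]=\mathbb{E}[X_i^{(n)}]$; the requirements $Y_i=X_i$ on $\{X_i\notin I\}$ and $\mathbb{P}[a<Y_i<b]=0$ hold by construction.

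It then remains to compare the performances. Since $F_{Y_i}$ and $F_i$ differ only on $[a,b)$, independence gives
$$\mathbb{E}[X_{(n)}]-\mathbb{E}[Y_{(n)}]=\int_0^\infty\Big(\prod_{i=1}^nF_{Y_i}(s)-\prod_{i=1}^nF_i(s)\Big)ds=\int_a^b\Big(\prod_{i=1}^n\gamma_i-\prod_{i=1}^nF_i(s)\Big)ds,$$
so that the inequality $\mathbb{E}[X_{(n)}]\ge\mathbb{E}[Y_{(n)}]$ is precisely
$$\frac1{b-a}\int_a^b\prod_{i=1}^nF_i(s)\,ds\;\le\;\prod_{i=1}^n\left(\frac1{b-a}\int_a^bF_i(s)^n\,ds\right)^{1/n}.$$
This is H\"older's inequality, applied to the $n$ nonnegative functions $F_1,\dots,F_n$ on the probability space $\big([a,b],\,ds/(b-a)\big)$ with all $n$ conjugate exponents equal to $n$.

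The one genuine point is that the endpoint flush must be carried out on all coordinates at once: performing it one $X_i$ at a time may \emph{increase} $\mathbb{E}[X_{(n)}]$ at an intermediate stage --- a deterministic $X_j$ lying inside $(a,b)$ already exhibits this --- whereas doing all $n$ replacements together collapses the comparison onto the single H\"older estimate above. The remaining verifications --- that $\gamma_i\in[F_i^-(a),F_i(b)]$, that the displayed identity for $\mathbb{E}[X_{(n)}]-\mathbb{E}[Y_{(n)}]$ is legitimate, and that the $Y_i$ may be taken independent --- are routine, finiteness of all the integrals being guaranteed by $\mathbb{E}[X_i]<\infty$.
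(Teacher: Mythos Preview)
Your proof is correct and uses the same construction as the paper --- push the mass each $X_i$ places on $[a,b]$ out to the endpoints in the unique proportion preserving $\mathbb{E}[X_i^{(n)}]$, then invoke H\"older's inequality with all exponents equal to $n$ --- but your execution is considerably more direct. The paper computes $\mathbb{E}[X_{(n)}]-\mathbb{E}[Y_{(n)}]$ via a combinatorial expansion over all nonempty subsets $K\subset\{1,\dots,n\}$, conditioning on which coordinates fall in $I$, introducing the shift $\hat z=(z-a)^+$, and only after substantial bookkeeping arrives at an expression to which H\"older applies. You bypass all of this by working directly with the tail-integral representation $\mathbb{E}[Z_{(n)}]=\int_0^\infty\big(1-\prod_iF_{Z_i}(s)\big)\,ds$; since the distribution functions of $X_i$ and $Y_i$ differ only on $[a,b)$, the difference $\mathbb{E}[X_{(n)}]-\mathbb{E}[Y_{(n)}]$ collapses immediately to a single integral over $[a,b]$, and H\"older applies in one line. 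Your parameter $\gamma_i$ is exactly the paper's $F_i^-(a)+p_i\alpha_i$; the two proofs are the same argument, yours simply packaged more efficiently.
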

\begin{proof}
Let $\xi_i$, $i=1,\ldots,n$, be such that
$\mathbb{P}[\xi_i=a]=\alpha_i$, $\mathbb{P}[\xi_i=b]=1-\alpha_i$
and $\xi_1,\ldots,\xi_n$ are independent of each other and of all
other random variables $X_i$. Now let
$$Y_i = \xi_i1_{X_i\in I} + X_i1_{X_i\not\in I}$$
and form the corresponding similar $n$-assemblies,
$(Y_i^1,\ldots,Y_i^n)_{i=1,\ldots,n}$. Our first objective is to
select $\alpha_i$ such that
$\mathbb{E}[X_i^{(n)}]=\mathbb{E}[Y_i^{(n)}]$. For simplicity, we
shall momentarily drop the index $i$ and compute more generally
$\mathbb{E}[X_{(n)}]-\mathbb{E}[Y_{(n)}]$. Further, we introduce
the notation, $\hat z = z\vee a-a = (z-a)^+$ and observe that,
\begin{enumerate}
\item $a\leq z\leq b$ iff $z\geq a$ and $\hat z\leq b-a$
\item $\widehat{z_{(n)}} = \hat z_{(n)}$
\item if $\mathcal{K}$ is set of non-empty subsets of $N=\{1,\ldots,n\}$,
\begin{equation}
\hat z_{(n)}\prod_{k=1}^n1_{z_k\geq a} = \hat z_{(n)} -
\sum_{K\in{\mathcal{K}}}\left(\bigvee_{k\in K}\hat z_k\right)\left(\prod_{k\in
K}1_{z_k\in I}\right)\left(\prod_{k\not\in K}1_{z_k<a}\right).
\label{chapeau}
\end{equation}
\end{enumerate}
For $K\in{\mathcal{K}}$, let
$$A_K = \bigcap_{k\in K}\{X_k\in I\},\
B_K = \bigcap_{k\not\in K}\{X_k<a\},\
C_K = A_K\cap B_K$$
and for any sequence $z_1,\ldots,z_n$,
$z_{[K]} = \bigvee_{k\in K}z_k$.
Using the identities,
$$\{X_{(n)}\in I\} = \bigcup_{K\in{\mathcal{K}}}C_K\mbox{ and }
X_{(n)}-Y_{(n)} =
\sum_{K\in{\mathcal{K}}}(X_{[K]}-\xi_{[K]})1_{C_K},$$ we obtain
\begin{eqnarray*}
\lefteqn{\mathbb{E}[X_{(n)}]-\mathbb{E}[Y_{(n)}]}\\
& = & \sum_{K\in{\mathcal{K}}}\left(\mathbb{E}[X_{[K]},C_K]-\mathbb{E}[\xi_{[K]},C_K]\right)\\
& = & \sum_{K\in{\mathcal{K}}}\left(\mathbb{E}[\hat X_{[K]}+a,C_K]-\mathbb{E}[\hat\xi_{[K]}+a,C_K]\right)\\
& = & \sum_{K\in{\mathcal{K}}}\left(\mathbb{E}[\hat X_{[K]},C_K]-\mathbb{E}[\hat\xi_{[K]},C_K]\right)\\
& = & \sum_{K\in{\mathcal{K}}}\mathbb{P}(B_K)\left(\mathbb{E}[\hat X_{[K]},A_K]-\mathbb{E}[\hat\xi_{[K]},A_K]\right)\\
& = & \mathbb{E}[\hat X_{(n)},A_N] +
\sum_{K\in{\mathcal{K}}\setminus\{N\}}\mathbb{P}(B_K)\mathbb{E}[\hat X_{[K]},A_K]
- \sum_{K\in{\mathcal{K}}}\mathbb{P}(A_K)\mathbb{P}(B_K)\mathbb{E}[\hat\xi_{[K]}]\\
& = & \mathbb{E}[\hat X_{(n)},D_N] - \sum_{K\in{\mathcal{K}}\setminus\{N\}}\mathbb{E}[\hat X_{[K]},D_N\cap B_K\cap E_K]\\
& & + \sum_{K\in{\mathcal{K}}\setminus\{N\}}\mathbb{P}(B_K)\mathbb{E}[\hat X_{[K]},A_K]
- \sum_{K\in{\mathcal{K}}}\mathbb{P}(A_K)\mathbb{P}(B_K)\mathbb{E}[\hat\xi_{[K]}]
\end{eqnarray*}
where $D_K = \bigcap_{k\in K}\{\hat X_k\leq b-a\}$, $E_K = \bigcap_{k\in K}\{X_k\geq a\}$ and
we have used (\ref{chapeau}). Applying the identities $D_N\cap B_K = D_K\cap B_K$ and $D_K\cap E_K = A_K$,
it follows that
\begin{eqnarray}
\lefteqn{\mathbb{E}[X_{(n)}]-\mathbb{E}[Y_{(n)}]}\nonumber\\
& = & \mathbb{E}[\hat X_{(n)},D_N] - \sum_{K\in{\mathcal{K}}\setminus\{N\}}\mathbb{P}(B_K)\mathbb{E}[\hat X_{[K]},A_K]\nonumber\\
& & + \sum_{K\in{\mathcal{K}}\setminus\{N\}}\mathbb{P}(B_K)\mathbb{E}[\hat X_{[K]},A_K]
- \sum_{K\in{\mathcal{K}}}\mathbb{P}(A_K)\mathbb{P}(B_K)\mathbb{E}[\hat\xi_{[K]}]\nonumber\\
& = & \mathbb{E}[\hat X_{(n)},D_N]
- \sum_{K\in{\mathcal{K}}}\mathbb{P}(A_K)\mathbb{P}(B_K)\mathbb{E}[\hat\xi_{[K]}]\label{General}
\end{eqnarray}
Therefore, with $p_i=\mathbb{P}[X_i\in I]$,
$\mathbb{E}[X_i^{(n)}]=\mathbb{E}[Y_i^{(n)}]$ if and only if
\begin{eqnarray*}
\mathbb{E}[\hat X_i^{(n)},\hat X_i^{(n)}\leq b-a]
& = & \sum_{m=1}^n{n\choose m}p_i^mF_i^-(a)^{n-m}(1-\alpha_i^m)(b-a)\\
& = & (b-a)\left[\sum_{m=1}^n{n\choose m}p_i^mF_i^-(a)^{n-m}-\sum_{m=1}^n{n\choose m}p_i^m\alpha_i^mF_i^-(a)^{n-m}\right]\\
& = & (b-a)[F_i(b)^n-(F^-_i(a)+p_i\alpha_i)^n]
\end{eqnarray*}
that is
\begin{equation}
\left(\frac{F^-_i(a)+p_i\alpha_i}{F_i(b)}\right)^n(b-a) =
(b-a) - \mathbb{E}[\hat X_i^{(n)}|\hat X_i^{(n)}\leq b-a].
\label{EXnEYn}
\end{equation}
Returning to (\ref{General}), we find
\begin{eqnarray*}
\lefteqn{\mathbb{E}[X_{(n)}]-\mathbb{E}[Y_{(n)}]}\\
& = & \mathbb{E}[\hat X_{(n)},D_N]
- \sum_{K\in{\mathcal{K}}}\mathbb{P}(A_K)\mathbb{P}(B_K)\mathbb{E}[\hat\xi_{[K]}]\\
& = & \mathbb{E}[\hat X_{(n)},\hat X_{(n)}\leq b-a]
- \sum_{K\in{\mathcal{K}}}\prod_{k\in K}p_k\prod_{k\not\in K}F^-_k(a)\left(1-\prod_{k\in K}\alpha_k\right)(b-a)\\
& = & \mathbb{E}[\hat X_{(n)}|\hat X_{(n)}\leq b-a]\prod_{k=1}^nF_k(b)\\
& & - \left[\prod_{k=1}^nF_k(b)-\prod_{k=1}^nF^-_k(a)-\prod_{k=1}^n(F^-_k(a)+p_k\alpha_k)+\prod_{k=1}^nF^-_k(a)\right](b-a)\\
& = & \mathbb{E}[\hat X_{(n)}|\hat X_{(n)}\leq b-a]\prod_{k=1}^nF_k(b)
- \left[\prod_{k=1}^nF_k(b)-\prod_{k=1}^n(F^-_k(a)+p_k\alpha_k)\right](b-a)\\
& = & \prod_{k=1}^nF_k(b)\left\{\mathbb{E}[\hat X_{(n)}|\hat X_{(n)}\leq b-a]
- (b-a)+(b-a)\prod_{k=1}^n\frac{F^-_k(a)+p_k\alpha_k}{F_k(b)}\right\}.
\end{eqnarray*}
Now, by a simple application of H\"older's inequality, we get
\begin{eqnarray*}
\left((b-a) - \mathbb{E}[\hat X_{(n)}|\hat X_{(n)}\leq b-a]\right)^n
& = & \left(\int_0^{b-a}\prod_{k=1}^n(1-G_k(z))dz\right)^n\\
& \leq & \prod_{k=1}^n\int_0^{b-a}(1-G_k(z))^ndz\\
& = & \prod_{k=1}^n\left((b-a) - \mathbb{E}[\hat X_k^{(n)}|\hat X_k^{(n)}\leq b-a]\right)\\
& = & (b-a)^n\prod_{k=1}^n\left(\frac{F^-_k(a)+p_k\alpha_k}{F_k(b)}\right)^n\\
\end{eqnarray*}
where $G_k$ is the conditional distribution function of $(b-a)-\hat X_k$ given
$\{\hat X_k\leq b-a\}$ and we have used (\ref{EXnEYn}). It immediately follows (recall that
$(b-a) - \mathbb{E}[\hat X_k^{(n)}|\hat X_k^{(n)}\leq b-a]\geq0$) that
$$(b-a) - \mathbb{E}[\hat X_{(n)}|\hat X_{(n)}\leq b-a] \leq
(b-a)\prod_{k=1}^n\left(\frac{F^-_k(a)+p_k\alpha_k}{F_k(b)}\right)$$
which completes the proof.
\end{proof}

A consequence of this proposition is an improved lower bound in
the case of bounded random variables.

\begin{corollary}
If $X_1,\ldots,X_n$ are independent bounded random variables with
the property that $\mathbb{E}[X_i^{(n)}]=M_i$, $i=1,2,...,n$ ,
then
$$\mathbb{E}[X_{(n)}]\geq b-\prod_{i=1}^n\sqrt[n]{b-M_i}\geq \bar M,$$
where $b$ is a common upper bound to all $X_i$'s.\label{BoundCase}
\end{corollary}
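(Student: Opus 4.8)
The plan is to invoke Proposition~\ref{down} a single time, with the degenerate choice $a=0$ and $I=[0,b]$, and then read off the bound. Since each $X_i$ is non-negative and bounded above by $b$, we have $\mathbb{P}[X_i\le b]=1>0$ and $X_i\in I$ almost surely, so the hypotheses of Proposition~\ref{down} are met. It furnishes independent random variables $Y_1,\ldots,Y_n$ with $\mathbb{E}[X_{(n)}]\ge\mathbb{E}[Y_{(n)}]$, with $\mathbb{E}[Y_i^{(n)}]=\mathbb{E}[X_i^{(n)}]=M_i$, and with $\mathbb{P}[0<Y_i<b]=0$; inspecting the construction in its proof ($Y_i=\xi_i1_{X_i\in I}+X_i1_{X_i\not\in I}$ with $\xi_i\in\{0,b\}$) and using $X_i\in I$ almost surely, each $Y_i$ is a two-point random variable supported on $\{0,b\}$.

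Next I would determine the law of $Y_i$ from the constraint $\mathbb{E}[Y_i^{(n)}]=M_i$. Writing $p_i=\mathbb{P}[Y_i=0]$, the maximum of $n$ independent copies of $Y_i$ equals $b$ unless all copies vanish, so $M_i=b(1-p_i^n)$ and hence $p_i=\sqrt[n]{(b-M_i)/b}$. Similarly $Y_{(n)}=b$ unless every $Y_i$ vanishes, the probability of which is $\prod_{i=1}^np_i$; therefore
$$\mathbb{E}[Y_{(n)}]=b\Bigl(1-\prod_{i=1}^np_i\Bigr)=b-b\prod_{i=1}^n\sqrt[n]{\frac{b-M_i}{b}}=b-\prod_{i=1}^n\sqrt[n]{b-M_i},$$
and combining this with $\mathbb{E}[X_{(n)}]\ge\mathbb{E}[Y_{(n)}]$ gives the first inequality. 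For the second, the arithmetic--geometric mean inequality applied to the non-negative numbers $b-M_i$ gives $\prod_{i=1}^n\sqrt[n]{b-M_i}\le\frac1n\sum_{i=1}^n(b-M_i)=b-\bar M$, whence $b-\prod_{i=1}^n\sqrt[n]{b-M_i}\ge\bar M$.

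The only point needing care is the legitimacy of applying Proposition~\ref{down} with $a=0$: one should check that $F_i^-(0)=0$ causes no degeneracy in its proof --- in fact it annihilates every term indexed by a proper subset $K\subsetneq N$, which merely simplifies the computation --- and that the conditions $\mathbb{P}[0<Y_i<b]=0$ and $0\le Y_i\le b$ genuinely force the two-point law on $\{0,b\}$. Granting this, the rest is the short computation above, so I do not anticipate a substantive obstacle.
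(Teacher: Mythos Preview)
Your proposal is correct and follows exactly the same route as the paper: apply Proposition~\ref{down} with $I=[0,b]$ to replace each $X_i$ by a two-point variable on $\{0,b\}$ with the same $M_i$, compute $\mathbb{E}[Y_{(n)}]$ explicitly, and finish with the AM--GM inequality. Your extra care about the boundary case $a=0$ and about reading off the two-point law from the construction is welcome but does not diverge from the paper's argument.
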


\begin{proof}
Applying Proposition \ref{down} to the interval $I=[0,b]$ and the
random variables $Y_i$ such that $\bds\mathbb{P}[Y_i=0] =
1-\mathbb{P}[Y_i=b] = \sqrt[n]{1-\frac{M_i}b}\eds$, we immediately
get that
$$\mathbb{E}[X_{(n)}] \geq \mathbb{E}[Y_{(n)}] =
\left(1-\prod_{i=1}^n\sqrt[n]{1-\frac{M_i}b}\right)b =
b-\prod_{i=1}^n\sqrt[n]{b-M_i} \geq \bar M.$$ The second
inequality is a direct consequence of the inequality for
arithmetic and geometric means.
\end{proof}

\subsection*{Acknowledgement}
This research was supported by the Australian Research Council.
The authors would also like to thank the anonymous referees for
their most valuable comments, in  particular about the existing
literature.

\end{document}